\documentclass[10pt]{amsart}

\usepackage{amsmath,amsthm,amssymb}
\usepackage{color}

\begin{document}

\newtheorem{proposition}{Proposition}[section]
\newtheorem{definition}[proposition]{Definition}
\newtheorem{corollary}[proposition]{Corollary}
\newtheorem{theorem}[proposition]{Theorem}
\newtheorem{condition}{Condition}
\newtheorem{remark}{Remark}[section]

\newcommand{\al}{\alpha} 
\newcommand{\bt}{\beta} 
\newcommand{\de}{\delta} 
\newcommand{\ga}{\gamma} 
\newcommand{\G}{\Gamma} 
\newcommand{\e}{\epsilon} 
\newcommand{\la}{\lambda} 
\newcommand{\si}{\sigma} 
\newcommand{\ph}{\varphi} 
\newcommand{\pa}{\partial}       
\newcommand{\vn}{\varnothing}       
\newcommand{\coi}{C_0^{\infty}} 
\newcommand{\ci}{C^{\infty}} 
\newcommand{\ts}{\text{supp}\,} 
\newcommand{\tss}{\text{singsupp}\,} 
\newcommand{\br}{{\mathbb R}} 
\newcommand{\rn}{\mathbb R^n} 
\newcommand{\adx}{\vct\cdot x} 
\newcommand{\Sn}{S^{n-1}} 
\newcommand{\sg}{\text{sgn}} 
\newcommand{\ibr}{\int_{\br}} 
\newcommand{\brt}{\br^2}       
\newcommand{\irt}{\int_{\brt}}       
\newcommand{\ioi}{\int_0^{\infty}}       
\newcommand{\iii}{\int_{-\infty}^{\infty}}       
\newcommand{\isn}{\int_{\Sn}}       
\newcommand{\fle}{f_{\Lambda\e}}
\newcommand{\CS}{{\mathcal S}}
\newcommand{\CD}{{\mathcal D}}

\newcommand{\CB}{\mathcal B} 
\newcommand{\CE}{\mathcal E} 

\numberwithin{equation}{section}

\title[Inversion of restricted ray transforms]{Reconstruction algorithms for a class of restricted ray transforms without added singularities}
\author{A. Katsevich}
\address{Department of Mathematics, University of Central Florida, Orlando, FL 32816-1364}
\email{alexander.katsevich@ucf.edu}
\thanks{This work was supported in part by NSF grants DMS-1115615 and DMS-1211164}

\date{}

\begin{abstract}
Let $X$ and $X^*$ denote a restricted ray transform along curves and a corresponding backprojection operator, respectively. Theoretical analysis of reconstruction from the data $Xf$ is usually based on a study of the composition $X^* D X$, where $D$ is some local operator (usually a derivative). If $X^*$ is chosen appropriately, then $X^* D X$ is a Fourier Integral Operator (FIO) with singular symbol. The singularity of the symbol leads to the appearance of artifacts (added singularities) that can be as strong as the original (or, useful) singularities. By choosing $D$ in a special way one can reduce the strength of added singularities, but it is impossible to get rid of them completely.  

In the paper we follow a similar approach, but make two changes. First, we replace $D$ with a nonlocal operator $\tilde D$ that integrates $Xf$ along a curve in the data space. The result $\tilde D Xf$ resembles the generalized Radon transform $R$ of $f$. The function $\tilde D Xf$ is defined on pairs $(x_0,\Theta)\in U\times S^2$, where $U\subset\br^3$ is an open set containing the support of $f$, and $S^2$ is the unit sphere in $\br^3$. Second, we replace $X^*$ with a backprojection operator $R^*$ that integrates with respect to $\Theta$ over $S^2$. It turns out that if $\tilde D$ and $R^*$ are appropriately selected, then the composition $R^* \tilde D X$ is an elliptic pseudodifferential operator of order zero with principal symbol 1. Thus, we obtain an approximate reconstruction formula that recovers all the singularities correctly and does not produce artifacts. The advantage of our approach is that by inserting $\tilde D$ we get access to the frequency variable $\Theta$. In particular, we can incorporate suitable cut-offs in $R^*$ to eliminate bad directions $\Theta$, which lead to added singularities. 
\end{abstract}

\subjclass[2000]{44A12, 65R10, 92C55}

\maketitle

\section{Introduction} \label{sec:intro}

Problems where a function, a vector field, or a tensor field needs to be reconstructed from its integrals along a family of curves occur in many applications, such as medical computed tomography (CT), geophysics, doppler tomography, electron microscopy, etc. (see e.g. \cite{bkr, uhl-01, sch-08, qr-13, hq15} and references therein). In this paper we consider a particular version of the problem, which is inspired by medical applications of CT, when the object being scanned undergoes a deformation (or, moves) during the scan. The most common example is cardiac CT. Even the fastest scanners available on the market today do not allow one to completely ``freeze" the motion of the heart, which leads to motion artifacts in the reconstructed images. See \cite{bkr} for an overview of the different concepts used in dynamic CT. From the mathematical perspective, the data in dynamic CT consists of integrals of the unknown attenuation coefficient $f(t,\cdot)$ along lines intersecting a curve in space. The latter is usually called the source trajectory, and the corresponding integral transform is called the restricted ray transform. Since the object changes during the scan, integrals of $f(t,\cdot)$ along lines at any time $t$ correspond to integrals of $f(t_0,\cdot)$ along some curves at reference time $t=t_0$.

While in certain cases deformations can be compensated theoretically exactly (see \cite{drg-07}), there is no exact inversion formula that can handle general motions that are practically relevant. Let $X$ and $X^*$ denote a restricted ray transform along curves and a corresponding backprojection operator, respectively. 
In the absence of inversion formulas, theoretical analysis of reconstruction from the data $Xf$ is usually based on the study of the composition $X^* D X$, where $D$ is some local (e.g., differential) operator \cite{gruhl, kat7, flu, qr-13, kq-15}. 
Usually $X^*$ is related to the formal dual of $X$, and may contain various cut-offs to make sure the composition $X^* D X$ is well-defined.
If $X^*$ is chosen appropriately, then $X^* D X$ is a Fourier Integral Operator (FIO) with singular symbol \cite{gruhl, flu}. The singularity of the symbol leads to the appearance of artifacts (added singularities) that can be as strong as the original (or, useful) singularities \cite{kat7, flu}. By choosing $D$ in a special way one can reduce the strength of added singularities \cite{kat06a}, but it is impossible to get rid of them completely. See also \cite{fq-11, qr-13} for applications of a similar idea in other settings. In the case of static objects, operators of the type $X^* D X$ are closely related to local (or, Lambda) tomography \cite{luma, rk, fbh01}. 

In the paper we follow a similar approach, but make two changes. First, we replace $D$ with a nonlocal operator $\tilde D$ that integrates $Xf$ along a curve in the data domain. The result $\tilde D Xf$ resembles the generalized Radon transform $R$ of $f$. The function $\tilde D Xf$ is defined on pairs $(x_0,\Theta)\in U\times S^2$, where $U\subset\br^3$ is an open set containing the support of $f$, and $S^2$ is the unit sphere in $\br^3$. Second, we replace $X^*$ with a backprojection operator $R^*$ that integrates with respect to $\Theta$ over $S^2$. It turns out that if (a) the source trajectory and the deformation of the object satisfy certain conditions, and (b) $\tilde D$ and $R^*$ are appropriately selected, then the composition $R^* \tilde D X$ is an elliptic pseudodifferential operator (PDO) of order zero with principal symbol 1. Thus, we obtain an approximate reconstruction formula that recovers all the singularities correctly and does not produce artifacts. The advantage of our approach is that by inserting $\tilde D$ we get access to the frequency variable $\Theta$. In particular, we can incorporate suitable cut-offs in $R^*$ to eliminate undesirable directions $\Theta$, which lead to added singularities. Such control is impossible when one uses operators of the type $X^* D X$, where $D$ is a differential operator. 

It is worth mentioning an important related paper \cite{fsu-08}, where inversion of a fairly general class of ray transforms is studied. Our results are different, because in \cite{fsu-08} the problem is overdetermined. If the dimension of the space is $n=3$, the data in \cite{fsu-08} has four degrees of freedom. In our case the data has three degrees of freedom. 

The paper is organized as follows. In Section~\ref{sec:prelim} we introduce the main notations and describe the reconstruction problem. In Section~\ref{sec:first_inv} we formulate the assumptions about the source trajectory and the deformation of the object. We also construct the operators $R^*$ and $\tilde D$ such that the composition $R^*\tilde D$ inverts the ray transform up to the leading order. In particular, we show that $\CB:=R^*\tilde D X$ is an elliptic PDO of order 1. 
In Section~\ref{sec:anal_inv} we consider a family of deformations depending on a parameter $\e$. We prove that if the deformation of the object becomes small as $\e\to0$, then $\CB_\e-\text{Id}\to0$. Here $\text{Id}$ is the identity operator, and the difference $\CB_\e-\text{Id}$ is viewed as an operator $H^\nu_0(U)\to H^{\nu-1}_{loc}(U)$. This result is similar to the one obtained in \cite{kat10b} in the case of a two-dimensional dynamic reconstruction problem. 
In Section~\ref{sec:alt_inv} we consider the static case and construct a localized operator $\tilde D$ such that computing $\CB f=R^*\tilde D X f$ at any $x_0\in U$ uses integrals of $f$ along lines passing through a small neighborhood of $x_0$. The neighborhood can be made as small as one likes, but $\CB$ is still an elliptic PDO with (complete) symbol 1.

In Section~\ref{sec:general} we compare the reconstructions based on $R^*\tilde D X$ and $X^*X$. It turns out that if no cut-offs are used in $R^*$, then both operators add singularities in the same places. This implies that using $R^*\tilde D$ instead of $X^*$ does not alter the nature of reconstruction from the restricted ray transform in a fundamental way. Instead, it allows one to use redundancies in the data to suppress artifacts. We also describe several generalizations of the algorithms of Sections~\ref{sec:first_inv} and \ref{sec:alt_inv}. In particular, we briefly outline other algorithms that can be useful for various applications. The algorithms of Sections~\ref{sec:first_inv} and \ref{sec:alt_inv} have been singled out and described in more detail because of the following two reasons. First, they illustrate the main ideas of the paper. Second, they have some special properties. The one in Section~\ref{sec:first_inv} is proven to converge to the exact inversion formula in a fairly strong sense if the deformation becomes small. The one in Section~\ref{sec:alt_inv} uses only local data and inverts the ray transform up to a $\ci$ function. These properties are important from the practical perspective. Finally, the proof of a technical result is presented in Appendix~\ref{sec:app}.

\section{Preliminaries} \label{sec:prelim}

Let $C$ be a piecewise smooth, non-selfintersecting curve in $\br^3$
\begin{equation}\label{curve}
\bigcup_{k=1}^K (a_k,b_k)=:I\ni s\to z(s)\in\br^3,\ |d z(s)/ds|\not=0,
\end{equation}
where $-\infty<a_k<b_k<\infty$, $1\leq k\leq K$,  the intervals $(a_k,b_k)$ are disjoint, and $\sup_{s\in I}|d z(s)/ds|<\infty$. Usually the source moves along (each segment of) $C$ with constant speed, so we identify $s$ with time variable. 

Fix any $s_0\in I$. We refer to $s=s_0$ as the reference time. To describe the deformation of the object being scanned, we introduce a function $\psi$. If at reference time $s_0$ a particle is located at the point $x$, then at time $s$ it is located at the point $y=\psi(s,x)$. We assume that for each $s\in I$ the function $\psi(s,x):\br^3\to \br^3$ is a diffeomorphism. Physically this means that two distinct points cannot move into the same position. This assumption is quite natural, since deformations of objects are not infinitely compressible. The inverse of $\psi$ is the function $x=\nu(s,y):\br^3\to \br^3$. If at time $s$ a particle is located at the point $y$, then $x=\nu(s,y)$ is the position of the particle at the reference time. We assume that (i) $C$ is at a positive distance from an open, bounded set $U$, which contains the support of the object for all $s\in I$, (ii) $\psi,\nu\in\ci(I\times \br^3)$, and (iii) $\psi$ and $\nu$ are the identity maps outside of $U$. 

Since matter is conserved, the x-ray attenuation coefficient at time $s$ and point $y$ is given by $|\pa_y\nu(s,y)| f(\nu(s,y))$. Here we assumed that the x-ray attenuation coefficient of the object is proportional to the density of the object. To account for more general dependence of the attenuation coefficient on density we introduce another factor $A(s,x)$, which is supposed to be a $\ci(I\times U)$ function, positive, bounded away from zero, and known. Hence, the attenuation coefficient of the object is represented by the function
\begin{equation}\label{object}
f_s(y):=A(s,\nu(s,y))|\pa_y\nu(s,y)|f(\nu(s,y)),\ s\in I.
\end{equation}
Consequently, the tomographic data are
\begin{equation}\label{data}
X_{f_s}(\bt):=\ioi f_s(z(s)+t\bt)dt,\ s\in I,\bt\in S^2.
\end{equation}

\section{First approximate inversion formula} \label{sec:first_inv}

The main idea of the derivation in this section is to apply the Grangeat formula to the ray transform data $X_{f_s}$ to obtain a function $Q$ that resembles the first derivative of the generalized Radon transform of $f$. Then, application of a suitably adopted Radon transform inversion formula to $Q$ will produce a reconstruction formula with the desired properties.

Applying the Grangeat formula to $X_{f_s}$ (or the identity in \cite{hssw}) gives
\begin{equation}\label{grang}
-\left.\pa_p\hat f_s(\al,p)\right|_{p=\al\cdot z(s)}=\int_{S^2} X_{f_s}(z(s),\bt)\delta'(\al\cdot\bt)d\bt,
\end{equation}
where $\hat f_s$ is the Radon transform of $f_s$. Using \eqref{object} rewrite the left side of \eqref{grang}:
\begin{equation}\label{rtder}
\begin{split}
-\left.\pa_p\hat f_s(\al,p)\right|_{p=\al\cdot z(s)}&=\int_{\br^3} f_s(y)\delta'(\al\cdot(y-z(s)))dy\\
&=\int_{\br^3} A(s,\nu(s,y))|\pa_y\nu(s,y)|f(\nu(s,y))\delta'(\al\cdot(y-z(s)))dy\\
&=\int_{\br^3} A(s,x)f(x)\delta'(\al\cdot(\psi(s,x)-z(s)))dx.
\end{split}
\end{equation}
Here we have used that $\nu(s,\cdot)$ and $\psi(s,\cdot)$ are the inverses of each other. 
Denote
\begin{equation}\label{beta-def}
\beta(s,x):=\frac{\psi(s,x)-z(s)}{|\psi(s,x)-z(s)|},\ x\in U.
\end{equation}
Let $\ga_{s,x}(t)$ be the preimage of the ray $z(s)+t(\psi(s,x)-z(s))$, $t>0$, at reference time, i.e. $\ga_{s,x}(t):=\nu(s,z(s)+t(\psi(s,x)-z(s)))$. Alternatively, intersection of the curve with $U$ can be described as follows: $\ga_{s,x}=\{w\in U:\bt(s,w)=\bt(s,x)\}$. By construction, $\ga_{s,x}(1)=x$  for all $s\in I$. As is easy to check, the vector  $d_x\psi(s,x)^{-1}\bt(s,x)$ is tangent to $\ga_{s,x}$ at $x$. Therefore, we introduce the notation:
\begin{equation}\label{gadot-def}
\dot\ga(s,x):=d_x\psi(s,x)^{-1}\bt(s,x).
\end{equation}

Let $x_0\in U$ be a reconstruction point. Given $s\in I$, choose any $\al\in S^2$ such that 
\begin{equation}\label{cond}
\al\cdot \bt(s,x_0)=0.
\end{equation}
With $\al$ satisfying \eqref{cond}, the argument of the delta-function in \eqref{rtder} becomes
\begin{equation}\label{theta}
\begin{split}
\al\cdot(\psi(s,x)-z(s))&=\al\cdot(\psi(s,x)-\psi(s,x_0))\\
&=\al\cdot d_x\psi(s,x_0)(x-x_0)+O(|x-x_0|^2)\\
&=\Theta_1\cdot(x-x_0)+O(|x-x_0|^2),\\
\Theta_1:&=d_x\psi(s,x_0)^T\al.
\end{split}
\end{equation}
Next we fix $\Theta\in S^2$ and solve the following equation for $s$ (cf. \eqref{cond}):
\begin{equation}\label{s-eqn}
\Theta\cdot \dot\ga(s,x_0)=0.
\end{equation}
This way we obtain several local solutions $s=s_j(x_0,\Theta)$. In view of \eqref{theta}, we use these local solutions to define
\begin{equation}\label{al-fam}
\al=\al_j(x_0,\Theta):=d_x\psi(s_j(x_0,\Theta),x_0)^{-T}\Theta.
\end{equation}
Here $\al$ is not necessarily a unit vector. For some pairs $(x_0,\Theta)\in U\times S^2$ there can be infinitely (even uncountably) many local solutions to \eqref{s-eqn}. Later we will select only a finite subset of these solutions.

Equation \eqref{s-eqn} implies that the curve $\{x\in U:\bt(s_j,x)=\bt(s_j,x_0)\}$ is perpendicular to $\Theta$ at $x=x_0$.

Divide the right-hand side of \eqref{rtder} by $A(s,x_0)$ and denote
\begin{equation}\label{integral}
Q_j(x_0,\Theta):=\int_{\br^3} \frac{A(s,x)}{A(s,x_0)}f(x)\delta'(\al\cdot(\psi(s,x)-z(s)))dx,\ 
\al=\al_j(x_0,\Theta),s=s_j(x_0,\Theta). 
\end{equation}
Then we denote 
\begin{equation}\label{t_vars}
x_t:=x_0+t\Theta,\ \al_t:=\al_j(x_t,\Theta),\ s_t:=s_j(x_t,\Theta).
\end{equation} 
For simplicity, the subscript $j$ is omitted from $\al_t,s_t$. Replacing $x_0$ with $x_t$ in \eqref{integral} and differentiating with respect to $t$ gives
\begin{equation}\label{der-int}
\begin{split}
\frac{d}{d t}&Q_j(x_t,\Theta)\biggr|_{t=0}\\
=&\int_{\br^3} f(x) \frac{\pa}{\pa s}\left(\frac{A(s,x)}{A(s,x_0)}\right)\left.\frac{d s_t}{d t}\right|_{t=0} \delta'(\al\cdot(\psi(s,x)-z(s)))dx\\
&+\int_{\br^3} f(x) \frac{A(s,x)}{A(s,x_0)}\left.\frac{d}{d t}\left[\al_t\cdot(\psi(s_t,x)-z(s_t))\right]\right|_{t=0} \delta''(\al\cdot(\psi(s,x)-z(s)))dx,\\
\al&=\al_{t=0}=\al_j(x_0,\Theta),\ s=s_{t=0}=s_j(x_0,\Theta).
\end{split}
\end{equation}
Using \eqref{t_vars} we compute
\begin{equation}\label{der-c1}
\begin{split}
\left.\frac{d}{d t}\left[\al_t\cdot(\psi(s_t,x)-z(s_t))\right]\right|_{t=0} 
&=\left.\frac{d}{d t}\left[\al_t\cdot(\psi(s_t,x)-\psi(s_t,x_t))\right]\right|_{t=0}\\
&=\left.\frac{d}{d t}\left[\al_t\cdot\left\{d_x\psi(s_t,x_t)(x-x_t)+O(|x-x_t|^2)\right\}\right]\right|_{t=0}\\
&=\left.\frac{d}{d t}\left[\Theta\cdot(x-x_t)+O(|x-x_t|^2)\right]\right|_{t=0}\\
&=-1+O(|x-x_0|).
\end{split}
\end{equation}
Additionally,
\begin{equation}\label{A-ratio}
\frac{A(s,x)}{A(s,x_0)}=1+O(|x-x_0|).
\end{equation}
In \eqref{der-c1} we need to make sure that the term $O(|x-x_0|)$ is uniform over the relevant range of parameters. By construction, the only term that can blow up is $\left.d s_t/d t\right|_{t=0}$. To compute this derivative we substitute $s=s_t$ and $x_0=x_t$ in \eqref{s-eqn} and write it in the form
\begin{equation}\label{s-eqn-1}
\Theta\cdot \dot\ga(s_t,x_t)\equiv 0.
\end{equation}
Differentiating \eqref{s-eqn-1} and setting $t=0$ yields:
\begin{equation}\label{dsdt}
\left.\frac{d s_t}{d t}\right|_{t=0}=\left.
-\frac{\Theta\cdot \{d_x\dot\ga(s,x)\Theta\}}
{\Theta\cdot \pa_s\dot\ga(s,x_0)}\right|_{s=s_j(x_0,\Theta)},
\end{equation}
provided that
\begin{equation}\label{denom}
\Theta\cdot \pa_s\dot\ga(s_j(x_0,\Theta),x_0)\not=0.
\end{equation}

Condition \eqref{denom} means that the vector tangent to the curve $\{x\in U:\bt(s,x)=\bt(s,x_0)\}$ at $x=x_0$ does not stay in the plane $\Pi(x_0,\Theta):=\{x\in\br^3:\,(x-x_0)\cdot\Theta=0\}$ when $s$ changes infinitesimally in a neighborhood of $s=s_j$. 
Equations \eqref{s-eqn} and \eqref{denom} are analogous to the Kirillov-Tuy condition in the static case \cite{kir, tuy}. The condition says that every plane passing through the object support intersects the source trajectory transversely. 

To make sure the denominator in \eqref{dsdt} is bounded away from zero and the local solutions $s_j$'s are smooth, we consider the following construction. Since the components of $C$ are smooth, a function $s_j$ may fail to be smooth when the denominator in \eqref{dsdt} equals zero or when $s_j$ coincides with an endpoint of a segment. Let $\e>0$ be sufficiently small. Define the set
\begin{equation}\label{M-def}
\begin{split}
M:=\{(x,\Theta,s)\in {\overline U}\times S^2\times \cup_k [a_k+\e,b_k-\e]:
\Theta\cdot \dot\ga(s,x)=0,\ |\Theta\cdot \pa_s\dot\ga(s,x)|\ge \e\}.
\end{split}
\end{equation}
Here and below the bar denotes closure. Pick any $(x_0,\Theta_0)\in {\overline U}\times S^2$. Clearly, there can be at most finitely many points $s_j$ such that $(x_0,\Theta_0,s_j)\in M$. By construction, there exists a sufficiently small open neighborhood $V\ni (x_0,\Theta_0)$ such that each $s_j$ is a smooth function of $(x,\Theta)$ for all $(x,\Theta)\in V$. A collection of such $V$'s, one per each $(x_0,\Theta_0)\in {\overline U}\times S^2$, covers ${\overline U}\times S^2$. Choose a finite subcover, and let $N_m(x,\Theta)$ be a partition of unity subordinate to this subcover. On the support of each $N_m$ we have finitely many smooth functions $s_{mj}(x,\Theta)$, $j=1,\dots,J_m$. To simplify the notation, in what follows we replace each of the $N_m$ in the partition of unity with its $J_m$ copies, replace each copy of $N_m$ with $N_m/J_m$, and on the support of the $j$-th copy consider only one solution $s_{mj}(x,\Theta)$. The resulting partition of unity will be denoted $\{N_j\}$, and the corresponding solutions (one per each $N_j$) will be denoted $s_j$. These are precisely the solutions that have been used earlier in this section.

Our construction ensures that the denominator in \eqref{dsdt} is bounded away from zero and each $s_j(x_0,\Theta)$ is smooth on $\ts N_j(x_0,\Theta)$. Clearly, there is at most finitely many such solutions.

Condition \eqref{denom} can be viewed from another perspective. Consider $\Theta$ rotating so that $s=s_j(x_0,\Theta)$ remains constant. Then $N_j(x_0,\Theta)$ needs to be zero in a neighborhood of the direction
\begin{equation}\label{crit-dirs}
\Theta_{crit}(s,x_0):=
\frac{\dot\ga(s,x_0)\times {\pa_s}\dot\ga(s,x_0)}{|\dot\ga(s,x_0)\times {\pa_s}\dot\ga(s,x_0)|}.
\end{equation}
 
   Multiply \eqref{der-int} by $N_j$ and integrate over $S^2$ with respect to $\Theta$. This gives
\begin{equation}\label{almost-inv}
\begin{split}
\int_{S^2} & N_j(x_0,\Theta)\left.\frac{d}{d t}Q_j(x_t,\Theta)\right|_{t=0}d\Theta\\
=&\frac1{2\pi}\int_{S^2}\int_{\br}\int_{\br^3} f(x) N_j(x_0,\Theta) \frac{\pa}{\pa s}\left(\frac{A(s,x)}{A(s,x_0)}\right)\left.\frac{d s_t}{d t}\right|_{t=0} e^{i\Psi_j(x,x_0,\la\Theta)}dx i\la d\la d\Theta\\
&-\frac1{2\pi}\int_{S^2}\int_{\br}\int_{\br^3} f(x)  N_j(x_0,\Theta) \frac{A(s,x)}{A(s,x_0)}\left.\frac{d}{d t}\left[\al_t\cdot(\psi(s_t,x)-z(s_t))\right]\right|_{t=0}\\
&\qquad\qquad\qquad \times e^{i\Psi_j(x,x_0,\la\Theta)}dx \la^2 d\la  d\Theta,\\
&\Psi_j(x,x_0,\la\Theta):=\la\al\cdot(\psi(s,x)-\psi(s,x_0)),\ \al=\al_j(x_0,\Theta),\ s=s_j(x_0,\Theta).
\end{split}
\end{equation}
Here we represented the $\delta$-function in terms of its Fourier transform. It can be seen that both integrals on the right in \eqref{almost-inv} can be expressed in terms of the variable $\xi=\la\Theta$. First, as is easily seen, $\int_{S^2}\int_{-\infty}^0 (\cdot) d\la d\Theta= \int_{S^2}\ioi (\cdot) d\la d\Theta$ in both integrals. With $\la>0$, we have $\Theta=\xi/|\xi|$ and $s=s_j(x_0,\xi/|\xi|)$. Also, the term $\left.\frac{d}{d t}\left[\al_t\cdot(\psi(s_t,x)-z(s_t))\right]\right|_{t=0}$ is even in $\Theta$. Indeed, the expression in brackets is odd in $\Theta$. The derivative $\left.\frac{d}{d t}[\cdot]\right|_{t=0}$ is essentially the directional derivative along $\Theta$, which makes the result even. From \eqref{al-fam},
\begin{equation}\label{laal}
\la\al=\la\al_j(x_0,\Theta)=d_x\psi(s_j(x_0,\xi),x_0)^{-T}\xi.
\end{equation}
Here and in what follows, all functions of $\Theta$ (e.g., $s_j$, $\al_j$, $N_j$) are extended to $\br^3\setminus\{0\}$ as homogeneous of degree zero.

In order to integrate with respect to $\xi$ in the first integral on the right in \eqref{almost-inv} we need an extra factor $\la$. Clearly,
\begin{equation}\label{extra-la}
\left.\frac{d s_t}{d t}\right|_{t=0}=\left(\la \left.\frac{d s_t}{d t}\right|_{t=0}\right)\frac1{|\xi|^2}\la.
\end{equation}
Using \eqref{dsdt} we see that the factor in parentheses in \eqref{extra-la} is a function homogeneous of degree one in $\xi$, and the desired ``extra" $\la$ is found. Let 
\begin{equation}\label{amplitude}
\begin{split}
B_j(x,x_0,\xi):=&\left( \pa_s \frac{A(s,x)}{A(s,x_0)}\right)\left(\la \left.\frac{d s_t}{d t}\right|_{t=0}\right)\frac{i}{|\xi|^2}\\
&-\frac{A(s,x)}{A(s,x_0)}\left.\frac{d}{d t}\left[\al_t\cdot(\psi(s_t,x)-z(s_t))\right]\right|_{t=0},\\
\al=&\al_j(x_0,\xi),\ s=s_j(x_0,\xi).
\end{split}
\end{equation}
By construction, $N_j(x_0,\xi)B_j(x,x_0,\xi)\in\ci(U\times U\times (\br^3\setminus \{0\}))$. Moreover,
\begin{equation}\label{ampl-prop}
B_j(x,x_0,\xi)=1+O(|x-x_0|)+O(1/|\xi|),\ (x,x_0,\xi)\in U\times \ts N_j.
\end{equation}
The term $O(|x-x_0|)$ is uniform in $\xi$, and the term $O(1/|\xi|)$ is uniform in $x,x_0$. From \eqref{theta}, \eqref{almost-inv}, \eqref{laal}, 
\begin{equation}\label{phase-prop}
\Psi_j(x,x_0,\xi)=\xi\cdot (x-x_0)+O(|\xi||x-x_0|^2),
\end{equation}
and the big-$O$ term is smooth on $U\times \ts N_j$. Next we consider the zero-set $C_{\Psi_j}$:
\begin{equation}\label{canon-rel}
C_{\Psi_j}:=\{(x,x_0,\xi)\in U\times \ts N_j:\, d_\xi\Psi_j(x,x_0,\xi)=0\}.
\end{equation}
Obviously,
\begin{equation}\label{canon-rel-1}
\Delta_j:=\{(x,x,\xi)\in U\times \ts N_j\}\subset C_{\Psi_j}.
\end{equation}
We need to make sure that no other points belong to $C_{\Psi_j}$. For general deformations this property may not hold, so an additional restriction is needed. In this paper we make an additional assumption which guarantees that $\Delta_j=C_{\Psi_j}$. 

Let us look at the condition $\Delta_j=C_{\Psi_j}$ in more detail. It is convenient to represent $\Psi_j$ in the form
(cf. \eqref{almost-inv}, \eqref{laal}):
\begin{equation}\label{psi-alt}
\begin{split}
&\Psi_j(x,x_0,\xi)=\eta\cdot (y-y_0)=\eta\cdot (y-z(s_j)),\\
&\eta:=d_x\psi(s_j,x_0)^{-T}\xi,\ y:=\psi(s_j,x),\ y_0:=\psi(s_j,x_0).
\end{split}
\end{equation}
Recall that with the above notations we have (cf. \eqref{s-eqn})
\begin{equation}\label{sj-def}
\eta\cdot(y_0-z(s_j))\equiv0.
\end{equation}
Condition $d_\xi \Psi_j=0$ means that the first order partial derivatives of $\Psi_j$ with respect to $\xi$ vanish. Differentiating \eqref{psi-alt} along the direction of $\xi$ implies $\Psi_j(x,x_0,\xi)=0$, i.e. $\eta\cdot (y-y_0)=0$. Differentiating \eqref{psi-alt} along the direction that makes $\eta$ rotate in the plane $(y_0-z(s_j))^\perp$ so that $s_j$ does not change (cf. \eqref{sj-def}), proves that $y-z(s_j)$ and $y_0-z(s_j)$ are parallel, i.e.
\begin{equation}\label{two-betas}
\bt(s_j,x_0)=\bt(s_j,x).
\end{equation}
Finally, we differentiate $\Psi_j$ in \eqref{psi-alt} along the direction that makes $\eta$ rotate along $y_0-z(s_j)$, i.e. $\eta'=\kappa(y_0-z(s_j))/|y_0-z(s_j)|$ for some $\kappa \not= 0$. Let $s_j'$ be the corresponding derivative of $s_j$. This gives
\begin{equation}\label{dir-der-psi}
\kappa |y-z(s_j)|+\eta\cdot \pa_s (y-z(s_j))s_j'=0.
\end{equation}
Differentiating \eqref{sj-def} along the same direction we obtain
\begin{equation}\label{dir-der-psi-2}
\kappa |y_0-z(s_j)|+\eta\cdot \pa_s(y_0-z(s_j))s_j'=0.
\end{equation}
From \eqref{dir-der-psi-2} it follows that $s_j'\not=0$. Combining \eqref{dir-der-psi} and \eqref{dir-der-psi-2} gives
\begin{equation}\label{alt-cond-3}
\frac{\eta\cdot\pa_s(y-z(s_j))}{|y-z(s_j)|}
=\frac{\eta\cdot \pa_s(y_0-z(s_j))}{|y_0-z(s_j)|}.
\end{equation}
Thus, a simple manipulation shows that $\Delta_j=C_{\Psi_j}$ is equivalent to:
\begin{equation}\label{cond-3-final}
\bt(s_j,x)=\bt(s_j,x_0) \text{ and } \xi\cdot d_x\psi(s_j,x_0)^{-1}\pa_s(\bt(s_j,x)-\bt(s_j,x_0))=0
\implies x=x_0.
\end{equation}

Similarly to \eqref{crit-dirs}, condition \eqref{cond-3-final} can be viewed from another perspective. Given $x_0\in U$ and $s=s_j(x_0,\xi)$, the function $N_j(x_0,\xi)$ should be zero in a neighborhood of the set of directions
\begin{equation}\label{crit-arc}
\begin{split}
\Xi_{crit}(s,x_0):=\{\xi=
\la d_x\psi(s,x_0)^{-T}\left[\bt(s,x_0)\times \pa_s(\bt(s,x)-\bt(s,x_0))\right],\\
\la\not=0,x\in U,\bt(s,x)=\bt(s,x_0)\}.
\end{split}
\end{equation}
If $\bt(s,x_0)\times \pa_s(\bt(s,x)-\bt(s,x_0))$ is the zero vector for some $x$ on the curve $\bt(s,x)=\bt(s,x_0)$, then $\Xi_{crit}(s,x_0):=S^2$. 

If opposite directions are identified, then, generally, the intersection of $\Xi_{crit}$ with the unit sphere consists of an arc in the plane $\dot\ga(s,x_0)^\perp$. The arc is parametrized by the point $x$ moving along the curve $\bt(s,x)=\bt(s,x_0)$. It is interesting to note that $\Theta_{crit}(s,x_0)$ (cf. \eqref{crit-dirs}) is one of the endpoints of the arc. This endpoint corresponds to the case when $x\to x_0$ (see Appendix~\ref{sec:app}). Of course, in the case of integrals along lines (i.e., when $\psi(s,x)\equiv x$ for all $s\in I$) the set $\Xi_{crit}\cap S^2$ consists of a single point $\Theta_{crit}$.

We make two observations based on this fact. First, if $\psi$ is not the identity function, then, generally, additional artifacts can appear because of the exceptional directions in \eqref{crit-arc}. The second one is that if the deformation is sufficiently small (i.e., the functions $\psi(s,x)$, $s\in I$, are close to the identity map and change with $s$ sufficiently slowly), then each arc is sufficiently close to a single point, so the entire arcs will be cut-off by the partition of unity $\{N_j\}$. For general transformations, when the arcs are not too short, we make the assumption that a partition of unity $\{N_j\}$ can be found to cut off the critical directions. This means, in particular, that $\Xi_{crit}(s,x_0)\not=S^2$ for any $(s,x_0)\in I\times U$.

Note that phase functions somewhat similar to $\Psi_j$ are known in the literature, see e.g. \cite{belk}. However, even though the phase functions in \cite{belk} and in this paper look similar, there is an important distinction between them. The one in \cite{belk} is symmetric with respect to space variables. Our phase function is not symmetric: both $s$ and $\al$, which appear in $\Psi_j$, depend on $x_0$, but not on $x$.

Summing \eqref{almost-inv} over all $j$ and dividing by $8\pi^2$ gives
\begin{equation}\label{pdo-inv}
\begin{split}
(\CB f)(x_0):=&\frac1{8\pi^2}\int_{S^2} \sum_j N_j(x_0,\Theta)\left.\frac{d}{d t}Q_j(x_t,\Theta)\right|_{t=0}d\Theta\\
=&\frac1{(2\pi)^3}\int_{\br^3}\int_{\br^3} f(x) \sum_j N_j(x_0,\xi) B_j(x,x_0,\xi) e^{i\Psi_j(x,x_0,\xi)}dx d\xi.
\end{split}
\end{equation}
Recall that, by construction, the sum in \eqref{pdo-inv} is finite for any $(x_0,\Theta)\in U\times S^2$. 

Let us summarize what we have so far. 
\begin{enumerate}
\item By assumption, $C_{\Psi_j}=\Delta_j$ for all $j$. \label{ass1}
\item Given any $(x_0,\xi)\in U\times (\br^3\setminus\{0\})$, there is $j$ such that $N_j(x_0,\xi)>0$. 
\item The amplitude $B_j$ and phase $\Psi_j$ satisfy
\begin{equation}\label{amphsum}
\begin{split}
B_j(x,x_0,\xi)&=1+O(|x-x_0|)+O(1/|\xi|),\\ 
\Psi_j(x,x_0,\xi)&=\xi\cdot (x-x_0)+O(|\xi||x-x_0|^2),\ (x,x_0,\xi)\in U\times\ts N_j.
\end{split}
\end{equation} 
\end{enumerate}
From \eqref{amphsum} and assumption (\ref{ass1}) above it is clear that $\Psi_j$ is a nondenerate phase function (cf. \cite{dus}, p. 31). Finally, it follows immediately from the construction of $\Psi_j$ that $\pa_x \Psi_j(x,x_0,\xi)=-\pa_{x_0} \Psi_j(x,x_0,\xi)$ when $x=x_0$ and $(x_0,\xi)\in\ts N_j$. Hence $\CB$ is a PDO (cf. \cite{dus}, p. 45). 

Properties \eqref{amphsum} prove that $\CB$ is an elliptic PDO of order zero with principal symbol 1 (recall that $\sum N_j\equiv 1$). On the other hand, the left side of \eqref{pdo-inv} is computable from the cone beam data. Thus, we obtained the desired approximate inversion formula. Since $\CB$ is an elliptic PDO, all the singularities are preserved, and added ones do not appear. We formulate our result as a theorem.

\begin{theorem}\label{thm1} 
Suppose there exist a finite partition of unity $\{N_j\}$ on ${\overline U}\times S^2$ and the corresponding smooth solutions $s_j$ to the equation
\begin{equation}\label{s-eqn-2}
\Theta\cdot \dot\ga(s,x_0)=0,\ s=s_j(x_0,\Theta), (x_0,\Theta)\in\ts N_j,
\end{equation}
with the following properties. If $(x_0,\Theta)\in\ts N_j$ and $s=s_j(x_0,\Theta)$, then
\begin{enumerate}
\item $\Theta\not\in \overline{\Xi_{crit}(s,x_0)}$, and
\item $z(s)$ is not an endpoint of $C$. 
\end{enumerate}
Denote
\begin{equation}\label{q-def-2}
Q_j(x_0,\Theta):=\frac1{A(s,x_0)} \int_{S^2} X_{f_s}(z(s),\bt)\delta'(\al\cdot\bt)d\bt,\ 
\al=\al_j(x_0,\Theta),s=s_j(x_0,\Theta).
\end{equation}
Let $x_t,\al_t$, and $s_t$ be as defined in \eqref{t_vars}. Then the operator 
\begin{equation}\label{pdo-inv-2}
\begin{split}
(\CB f)(x_0):=\frac1{8\pi^2}\int_{S^2} \sum_j N_j(x_0,\Theta)\left.\frac{d}{d t}Q_j(x_t,\Theta)\right|_{t=0}d\Theta
\end{split}
\end{equation}
is an elliptic PDO of order zero with principal symbol 1.
\end{theorem}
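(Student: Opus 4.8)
The plan is to reduce the claim to the structural facts about nondegenerate phase functions that the preceding derivation has already assembled, and to show that hypotheses (1)--(2) furnish exactly the nondegeneracy that is still missing. Since the assertion is local and the partition $\{N_j\}$ is finite, I would analyze each summand of \eqref{pdo-inv-2} separately and add the results. First I would verify that under (1)--(2) the construction of this section genuinely applies on $\ts N_j$: hypothesis (2) keeps $s=s_j(x_0,\Theta)$ away from the endpoints of the segments of $C$, so that together with the set $M$ of \eqref{M-def} each $s_j$ is a well-defined smooth function of $(x_0,\Theta)$; and hypothesis (1) contains the requirement \eqref{denom}, because $\Theta_{crit}$ of \eqref{crit-dirs} is precisely the endpoint of the arc $\Xi_{crit}\cap S^2$ obtained as $x\to x_0$, and $\Theta=\Theta_{crit}$ is exactly where $\Theta\cdot\pa_s\dot\ga$ vanishes. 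Hence on $\ts N_j$ the denominator in \eqref{dsdt} is bounded away from zero, $ds_t/dt|_{t=0}$ stays bounded, and $Q_j$ together with its $t$-derivative is smooth.

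Next I would reproduce the oscillatory-integral representation \eqref{pdo-inv} of $\CB f$: differentiate $Q_j(x_t,\Theta)$ in $t$ as in \eqref{der-int}, express the $\delta'$- and $\delta''$-factors through their Fourier transforms, pass from $(\Theta,\la)$ to the single frequency $\xi=\la\Theta$ using \eqref{laal} and the parity remarks after \eqref{almost-inv}, and absorb the remaining factors into the amplitude $B_j$. The Taylor expansions \eqref{der-c1} and \eqref{A-ratio}, combined with \eqref{theta}, then yield the key estimates \eqref{amphsum}, with the $O$-terms smooth on $U\times\ts N_j$.

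With these in hand, the core step is to promote the resulting finite sum of oscillatory integral operators to a single PDO. I would check three properties of each degree-one homogeneous phase $\Psi_j$: (a) nondegeneracy in the sense of \cite{dus}; (b) that its stationary set $C_{\Psi_j}$ of \eqref{canon-rel} coincides with the diagonal $\Delta_j$ of \eqref{canon-rel-1}; and (c) the symmetry $\pa_x\Psi_j=-\pa_{x_0}\Psi_j$ on $\{x=x_0\}$. Property (a) follows from \eqref{amphsum}, since $d_\xi\Psi_j$ has surjective differential along the diagonal; property (c) is automatic because $\al$ and $s$ depend only on $x_0$ and $\Psi_j$ vanishes to second order on the diagonal. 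Granting (a)--(c), Duistermaat's characterization (\cite{dus}, pp.~31, 45) identifies each summand as a PDO, and summing over the finite index set keeps $\CB$ a PDO; evaluating $B_j$ on the diagonal and using $\sum_j N_j\equiv1$ then gives complete symbol $1+O(1/|\xi|)$, so $\CB$ is elliptic of order zero with principal symbol $1$.

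The main obstacle is property (b), the identity $C_{\Psi_j}=\Delta_j$. The inclusion $\Delta_j\subset C_{\Psi_j}$ is immediate from \eqref{canon-rel-1}; the reverse requires ruling out off-diagonal stationary points. Here I would run the differentiation argument of \eqref{psi-alt}--\eqref{alt-cond-3}: writing $\Psi_j=\eta\cdot(y-y_0)$ as in \eqref{psi-alt} and using \eqref{sj-def}, differentiating $\Psi_j$ along three independent $\xi$-directions forces successively $\eta\cdot(y-y_0)=0$, then $\bt(s_j,x)=\bt(s_j,x_0)$, and finally the collinearity relation \eqref{alt-cond-3}. Re-expressing these in the variable $x$ produces the implication \eqref{cond-3-final}, whose premise says exactly that $\xi\in\Xi_{crit}(s,x_0)$ of \eqref{crit-arc}. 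Thus hypothesis (1), which makes $N_j$ vanish near $\overline{\Xi_{crit}}$, guarantees the premise of \eqref{cond-3-final} is never met on $\ts N_j$ except at $x=x_0$, delivering $C_{\Psi_j}=\Delta_j$ and completing the argument.
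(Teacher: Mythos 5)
Your proposal is correct and follows essentially the same route as the paper: the paper's proof of Theorem~\ref{thm1} is precisely the derivation \eqref{grang}--\eqref{pdo-inv} summarized before the theorem statement, including the passage to the frequency variable $\xi=\la\Theta$, the amplitude/phase estimates \eqref{amphsum}, the identification $C_{\Psi_j}=\Delta_j$ via \eqref{psi-alt}--\eqref{cond-3-final} under the cut-off of $\Xi_{crit}$, and the appeal to \cite{dus} (pp.~31, 45) to conclude $\CB$ is an elliptic PDO of order zero with principal symbol $1$. Your additional observation that hypothesis (1) subsumes \eqref{denom}, because $\Theta_{crit}$ is the $x\to x_0$ endpoint of the arc $\Xi_{crit}\cap S^2$, is exactly the role the paper assigns to Appendix~\ref{sec:app}.
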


From the equations \eqref{almost-inv} and \eqref{pdo-inv} we see that by using the intermediate function $Q$, which is based on an integral of the ray tranform, we get access to the frequency variable $\Theta$. This allows us to incorporate the cut-offs $N_j$ at the backprojection step and eliminate undesirable directions $\Xi_{crit}$, which otherwise would have lead to added singularities. 



\section{Analysis of the inversion formula} \label{sec:anal_inv}

In this section we suppose that the deformation is close to the identity. To be precise, we assume that $\psi$ and $A$ depend on a parameter $\e$, and the following assumptions hold:
\begin{equation}\label{smallness}
A_\e(s,x)\to 1 \text{ and } \psi_\e(s,x) \to x \text{ in } \ci(I\times \br^3) \text{ as }\e\to0.
\end{equation}
Thus, for any index $k$ and any multiindex $m$ we have:
\begin{equation}\label{seminorms}
\sup_{(s,x)\in I\times \br^3}|\pa_s^k\pa_x^m(A_\e(s,x)-1)|\to0,\
\sup_{(s,x)\in I\times \br^3}|\pa_s^k\pa_x^m(\psi_\e(s,x)-x)|\to0 \text{ as }\e\to0.
\end{equation}

\begin{theorem}\label{thm2} 
Suppose \eqref{smallness} holds. Pick any $\nu\in\br$ and consider the operators $\CB_\e-\text{Id}: H^\nu_0(U)\to H^{\nu-1}_{loc}(U)$. Then $\CB_\e-\text{Id}\to0$ as $\e\to0$.
\end{theorem}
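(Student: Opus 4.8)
The plan is to exploit the fact, already established in Theorem~\ref{thm1}, that each $\CB_\e$ is a PDO, and to show that its full symbol converges to the constant $1$ as $\e\to0$ at a rate controlled by the seminorms in \eqref{seminorms}. Concretely, I would start from the oscillatory-integral representation \eqref{pdo-inv}, rewritten relative to the flat phase $\xi\cdot(x-x_0)$: setting $R_j(x,x_0,\xi):=\Psi_j(x,x_0,\xi)-\xi\cdot(x-x_0)$, the operator $\CB_\e$ has amplitude $a_\e:=\sum_j N_j B_j e^{iR_j}$. Since $\sum_j N_j\equiv1$ and the amplitude $1$ reproduces the identity, $\CB_\e-\text{Id}$ is the PDO with amplitude $b_\e:=\sum_j N_j\big(B_j e^{iR_j}-1\big)$. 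The whole proof then reduces to showing that $b_\e$, reduced to a left symbol, is small in finitely many symbol seminorms of order one.

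The first substantive step is to upgrade the pointwise estimates \eqref{amphsum} to estimates whose big-$O$ coefficients vanish as $\e\to0$. Inspecting \eqref{amplitude}, the first term of $B_j$ carries a factor $\pa_s(A_\e(s,x)/A_\e(s,x_0))$, which tends to $0$ uniformly by \eqref{seminorms}; its companion $\la\,ds_t/dt$ is homogeneous of degree one and stays bounded because the denominator in \eqref{dsdt} is bounded away from zero by the construction of $M$ in \eqref{M-def} (whose parameter is fixed independently of the deformation). The second term of $B_j$ tends to $-(-1)=1$ because \eqref{der-c1} gives $\frac{d}{dt}[\cdots]=-1$ plus a remainder built entirely from the second-order Taylor remainder of $\psi_\e$, which vanishes by \eqref{seminorms}. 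Hence $B_j-1=O(|x-x_0|)+O(1/|\xi|)$ with both coefficients tending to $0$ in all relevant $\ci$ seminorms. Likewise, writing $R_j=\eta\cdot(\psi(s_j,x)-\psi(s_j,x_0))-\xi\cdot(x-x_0)$ and expanding in $x-x_0$, the quadratic-in-$(x-x_0)$ coefficient of $R_j$ is governed by $\pa_x^2\psi_\e$ and by $\eta-\xi=(d_x\psi_\e^{-T}-I)\xi$, both of which vanish uniformly; thus $R_j=O(|\xi||x-x_0|^2)$ with coefficient $\to0$.

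Next I would pass from the amplitude $b_\e$ to a genuine left symbol $p_\e(x_0,\xi)$. The obstruction here is the factor $e^{iR_j}$: because $R_j$ is homogeneous of degree one in $\xi$, it cannot be treated as an order-zero symbol, and one must use its double zero at $x=x_0$. I would straighten the phase by a Kuranishi-type substitution $\xi\mapsto\tilde\xi=\tilde\xi(x,x_0,\xi)$ with $\Psi_j=\tilde\xi\cdot(x-x_0)$ and $\tilde\xi=\xi+O(|\xi||x-x_0|)$ (the deviation of $\tilde\xi$ from $\xi$ and of the Jacobian from $1$ both vanish by the previous step), turning $\CB_\e$ into a standard PDO and enabling the usual amplitude-to-symbol asymptotic expansion. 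In that expansion every factor of $(x-x_0)$ is converted into a $\pa_{\tilde\xi}$ and lowers the order; the two factors hidden in $R_j$ and the single factor in the $O(|x-x_0|)$ part of $B_j-1$ therefore produce only terms of nonpositive order, each carrying a coefficient that tends to $0$. I would not chase the sharp order, since allowing a generous margin (working in the order-one class $S^1$) makes the remainder of the asymptotic expansion trivial to bound uniformly in $\e$.

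Finally, having shown that $p_\e-1\to0$ in finitely many $S^1$ seminorms, I would invoke the standard Sobolev continuity of PDOs (Calder\'on--Vaillancourt): an operator of order one whose symbol is small in enough seminorms is small as a map $H^\nu\to H^{\nu-1}$, and a proper cutoff supported near $\overline U$ accommodates the spaces $H^\nu_0(U)$ and $H^{\nu-1}_{loc}(U)$. The main obstacle I anticipate is \emph{uniformity in $\e$} throughout the symbol reduction---in particular, ensuring that the partition $\{N_j\}$, the solutions $s_j$, and all constants in the Kuranishi change of variables and the remainder estimates can be controlled independently of $\e$ for small deformations (using that the critical arcs $\Xi_{crit}$ shrink to the points $\Theta_{crit}$ as $\e\to0$). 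Once that bookkeeping is in place, the convergence $\CB_\e-\text{Id}\to0$ follows directly from \eqref{seminorms}.
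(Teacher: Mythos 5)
Your proposal follows essentially the same route as the paper's own proof: both reduce $\CB_\e-\text{Id}$ to flat-phase operators via the same Kuranishi change of variables (the paper's \eqref{tilde-th}, which is exactly your $\tilde\xi$ with $\Psi_j=\eta\cdot(x-x_0)$), both show the resulting amplitudes are $o_\e(1)$ in symbol seminorms using \eqref{seminorms} while converting the leftover factors of $(x-x_0)$ into frequency derivatives (the paper by explicit integration by parts in $\eta$, you by the amplitude-to-symbol reduction, which is the same device), and both conclude by standard Sobolev continuity of PDOs with small seminorms. The only step you omit is the paper's low-frequency cutoff $\chi$: the first term of $B_j$ in \eqref{amplitude} is homogeneous of degree $-1$ and hence not smooth (only integrable) at $\xi=0$, so the region $|\xi|\le 2$ must be split off and treated separately, as the paper does in \eqref{new-PDOs-3} -- a routine fix that does not affect your argument.
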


\begin{proof} 
We must prove that for every $g\in\coi(U)$ and for every compact $K\subset U$, there is a constant $C_\e>0$ such that
\begin{equation}\label{cont}
\Vert g(\CB_\e-\text{Id})f\Vert_{\nu-1}\le C_\e \Vert f \Vert_\nu,\ \forall f\in\coi(K),
\end{equation}
and $C_\e\to0$ as $\e\to0$.

For simplicity, in what follows the dependence of various functions on $\e$ is omitted from  notations. 

Since $\sum_j N_j(x_0,\Theta)\equiv1$ on $U\times S^2$, \eqref{amplitude} implies that we must show that the 
PDOs 
\begin{equation}\label{new-PDOs}
(\CB_{kj}f)(x_0):=\frac1{(2\pi)^3} \int_{\br^3}\int_{\br^3} B_{kj}(x,x_0,\xi) f(x) e^{i\Psi_j(x,x_0,\xi)}dxd\xi,\ k=1,2,
\end{equation}
where 
\begin{equation}\label{new-amplitude}
\begin{split}
B_{1j}(x,x_0,\xi):=&g(x_0)h(x)N_j(x_0,\xi)\left( \frac{\pa}{\pa s} \frac{A(s,x)}{A(s,x_0)}\right)\left(\la \left.\frac{d s_t}{d t}\right|_{t=0}\right)\frac{i}{|\xi|^2},\\
B_{2j}(x,x_0,\xi):=&g(x_0)h(x)N_j(x_0,\xi)\biggl(\frac{A(s,x)}{A(s,x_0)}\left.\frac{d}{d t}\left[\al_t\cdot(\psi(s_t,x)-z(s_t))\right]\right|_{t=0}+1\biggr),\\
\al=&\al_j(x_0,\xi),\ s=s_j(x_0,\xi),
\end{split}
\end{equation}
converge in norm to the zero operator $H^\nu_0(U)\to H^{\nu-1}_0(U)$ for any $j$. Here we inserted $h\in \coi(U)$ such that $h\equiv 1$ on $K$.

Pick any function $\chi\in\ci(\br^3)$ satisfying $\chi(\xi)=0$ if $|\xi|\leq1$ and $\chi(\xi)=1$ if $|\xi|\ge2$. We will analyze the operators with amplitudes $B_{kj}(x,x_0,\xi)(1-\chi(\xi))$ and $B_{kj}(x,x_0,\xi)\chi(\xi)$. We start by looking at the latter. Until mentioned otherwise, our standing assumption in what follows is
\begin{equation}\label{st-ass}
(x,x_0,\xi)\in U\times \ts N_j,\ |\xi|\ge1.
\end{equation}

Define the function $\eta=\eta(x,x_0,\xi)$ from the equation
\begin{equation}\label{tilde-th}
\xi\cdot d_x\psi(s_j,x_0)^{-1}(\psi(s_j,x)-\psi(s_j,x_0))
=\eta \cdot (x-x_0).
\end{equation}
Using the Taylor expansion and \eqref{seminorms}, rewrite \eqref{tilde-th} in the form:
\begin{equation}\label{tilde-th-2}
\xi\cdot \left[\text{Id}+o_\e(1)(x-x_0)\right](x-x_0)
=\eta \cdot (x-x_0).
\end{equation}
Here $o_\e(1)$ is a tensor of order three, which is homogeneous of degree zero in $\xi$. The subscript $\e$ in $o_\e(1)$ means that the latter becomes small as $\e\to0$. In what follows, the same notation $o_\e(1)$ is used for various kinds of function (e.g., matrix-valued, vector-valued, etc.). From the context it will be clear what kind of function is assumed in each particular case. Because of \eqref{seminorms}, $o_\e(1)$ goes to zero with all derivatives as $\e\to0$ uniformly with respect to $(x,x_0,\xi)$ (cf. \eqref{st-ass}). Therefore, we can define
\begin{equation}\label{tth-def}
\eta(x,x_0,\xi):=\left[\text{Id}+o_\e(1)(x-x_0)\right]^T\xi.
\end{equation}
If $\e$ is small enough, then \eqref{tth-def} can be solved for $\xi$ in terms of $\eta$ and 
\begin{equation}\label{det-to-1}
\text{det}(\pa\xi/\pa\eta)= 1+o_\e(1)(x-x_0)\text{ as } \e\to0.
\end{equation}
As before, $o_\e(1)$ goes to zero with all derivatives uniformly with respect to $(x,x_0,\xi)$ in the indicated set (cf. \eqref{st-ass}). It is important to point out that the assumption \eqref{st-ass} has different meanings  in \eqref{tth-def} and \eqref{det-to-1}. In \eqref{tth-def}, $\xi$ is an independent variable. In \eqref{det-to-1}, 
$\xi$ is a function of $x$, $x_0$, and $\eta$. Thus, in \eqref{det-to-1}, the assumption \eqref{st-ass} means that $x\in U$ and $(x_0,\xi(x,x_0,\eta))\in\ts N_j$. This meaning will be implied in what follows whenever $\xi$ is a dependent variable.

Changing variables we obtain PDOs of the type
\begin{equation}\label{new-PDOs-2}
\frac1{(2\pi)^3} \int_{\br^3}\int_{\br^3} B_{kj}(x,x_0,\xi)\chi(\xi)\text{det}\left(\frac{\pa\xi}{\pa\eta}\right) f(x) e^{i\eta\cdot(x-x_0)}dxd\eta,
\end{equation}
where $\xi=\xi(x,x_0,\eta)$. Consider first $B_{1j}$. From \eqref{new-amplitude} and \eqref{new-PDOs-2}, after multiplying $B_{1j}$ by $|\xi|$ we obtain 
\begin{equation}\label{need-to-show}
\begin{split}
g(x_0)h(x)&N_j(x_0,\xi)\chi(\xi)\left( \pa_s \frac{A(s,x)}{A(s,x_0)}\right) \\
&\times\left[\frac1{|\xi|}\left.\frac{d s_t}{d t}\right|_{t=0}\right]\text{det}\left(\frac{\pa\xi}{\pa\eta}\right)\to0 \text{ in }\ci(U\times U\times  \br^3),\ \e\to0.
\end{split}
\end{equation}
Indeed, utilizing \eqref{smallness}, \eqref{det-to-1}, and observing that $\left.\frac{d s_t}{d t}\right|_{t=0}$ is bounded with all derivatives, \eqref{need-to-show} immediately follows. Note that the expression in \eqref{need-to-show} is homogeneous of degree zero in $\eta$ (for large $|\eta|)$.

To analyze $B_{2j}$ we need an intermediate result. Obviously,
\begin{equation}\label{A-est}
\frac{A(s,x)}{A(s,x_0)}=1+o_\e(1)(x-x_0).
\end{equation}
Also, similarly to \eqref{der-c1} and \eqref{tilde-th-2}, we obtain
\begin{equation}\label{der-c1-alt}
\begin{split}
&\left.\frac{d}{d t}\left[\al_t\cdot(\psi(s_t,x)-z(s_t))\right]\right|_{t=0} \\
&=\left.\frac{d}{d t}\left[\al_t\cdot(\psi(s_t,x)-\psi(s_t,x_t))\right]\right|_{t=0}\\
&=\left.\frac{d}{d t}\left[\al_t\cdot\left\{d_x\psi(s_t,x_t)(x-x_t)+o_\e(1)(x-x_t,x-x_t)\right\}\right]\right|_{t=0}\\
&=\left.\frac{d}{d t}\left[\Theta\cdot(x-x_t)+o_\e(1)(x-x_t,x-x_t)\right]\right|_{t=0}\\
&=-1+o_\e(1)(x-x_0).
\end{split}
\end{equation}
In the third and fourth lines of \eqref{der-c1-alt}, the two copies of $x-x_t$ are input vectors on which the degree-three tensor $o_\e(1)$ operates. Combining \eqref{det-to-1}, \eqref{A-est}, and \eqref{der-c1-alt}, we get 
\begin{equation}\label{part2}
\frac{A(s,x)}{A(s,x_0)}\left.\frac{d}{d t}\left[\al_t\cdot(\psi(s_t,x)-z(s_t))\right]\right|_{t=0}\text{det}\left(\frac{\pa\xi}{\pa\eta}\right)+1=o_\e(1)(x-x_0).
\end{equation}
Consequently,
\begin{equation}\label{part2-2}
\begin{split}
g(x_0)h(x)&N_j(x_0,\xi)\chi(\xi)\\
&\times\biggl[\frac{A(s,x)}{A(s,x_0)}\left.\frac{d}{d t}\left[\al_t\cdot(\psi(s_t,x)-z(s_t))\right]\right|_{t=0}\text{det}\left(\frac{\pa\xi}{\pa\eta}\right)+1\biggr]\\
=&o_\e(1)(x-x_0)\to 0 \text{ in } \ci(U\times U\times \br^3).
\end{split}
\end{equation}
The PDO with the amplitude \eqref{part2-2} is given by
\begin{equation}\label{pdo-part2}
\int_{\br^3}\int_{\br^3} f(x) [\chi(\xi)o_\e(1)(x-x_0)] e^{i\eta\cdot (x-x_0)}dx d\eta.
\end{equation}
Recall that $o_\e(1)$ in \eqref{pdo-part2} is homogeneous of degree zero in $\eta$. Integrating by parts with respect to $\eta$ in \eqref{pdo-part2} (see e.g. \cite{trev1}, p. 33) results in terms of the type: $\chi'(\xi)o_\e(1)$ and $\chi(\xi)o_\e(1)$ (recall that $\xi$ is a function of $\eta$).
In the first one, $\chi'(\xi)$ is compactly supported and $o_\e(1)$ is homogeneous of degree zero in $\eta$.
In the second one, $o_\e(1)$ is homogeneous of degree -1 in $\eta$. In both cases, the factors $o_\e(1)$  remain stable when differentiated with respect to $x$, $x_0$, and $\eta$. Combining with \eqref{need-to-show} we prove that every $S^{-1}_{1,0}$ seminorm of the amplitude of the PDOs in \eqref{new-PDOs-2} goes to zero as $\e\to0$. Using the conventional argument (see e.g. \cite{trev1}, pp. 17, 18), we prove that the PDOs in \eqref{new-PDOs-2} go to zero in norm as operators $H^\nu_0(U)\to H^{\nu-1}_0(U)$.

To finish the proof we need to look at PDOs of the type:
\begin{equation}\label{new-PDOs-3}
\frac1{(2\pi)^3} \int_{\br^3}\int_{\br^3} B_{kj}(x,x_0,\xi)(1-\chi(\xi)) f(x) e^{i\Psi_j(x,x_0,\xi)}dxd\xi.
\end{equation}
The change of variables $\xi\to\eta$ is not needed here, and the assumption \eqref{st-ass} does not apply. We have: (i) $B_{kj}(\cdot,\cdot,\xi)\in \coi(U\times U)$ and $\Psi_j(\cdot,\cdot,\xi)\in \ci(U\times U)$, (ii) $|\xi|B_{1j}(x,x_0,\xi)=o_\e(1)$, $B_{2j}(x,x_0,\xi)=o_\e(1)$, and both $o_\e(1)$ terms are stable when differentiated with respect to $x$, $x_0$, and (iii) $B_{kj}$, $k=1,2$, are integrable at the origin $\xi=0$. Hence the desired assertion follows.
\end{proof}

\section{Localized inversion in the static case} \label{sec:alt_inv}

To illustrate the idea of localized reconstruction we consider an important static case. The data are
\begin{equation}\label{data-static}
X_f(s,\bt):=\ioi f(z(s)+t\bt)dt,\ s\in I,\bt\in S^2.
\end{equation}
Consider the integral arising in the proof of the Grangeat formula. For simplicity we assume first that the plane of integration is perpendicular to the $x_3$-axis, and the source is located at the point $z$. Let $\phi\in\ci(\br^3\setminus\{0\})$ be a function homogeneous of degree zero. Denoting
\begin{equation}\label{Theta-eps}
u_\e(\theta):=(\sqrt{1-\e^2}\cos\theta,\sqrt{1-\e^2}\sin\theta,\e)\in S^2,
\end{equation}
we have
\begin{equation}\label{gr-start}
\begin{split}
\int_0^{2\pi}&\left.\frac{d}{d\e}\ioi f(z+t u_\e(\theta)) \phi(t u_\e(\theta)) dt \right|_{\e=0}d\theta\\
&=\int_{\br^2} \left.\frac{\pa}{\pa x_3} f(z_1+x_1,z_2+x_2,z_3+x_3)\phi(x_1,x_2,x_3)\right|_{x_3=0} dx_1 dx_2\\
&=-\int_{\br^3} f(z+x)\phi(x)\delta'(e_3\cdot x)dx,
\end{split}
\end{equation}
where $e_3$ is the unit vector along the $x_3$-axis. Since $\phi$ is homogeneous of degree zero, the left side of \eqref{gr-start} can be computed from the data \eqref{data-static}. In coordinate-free form equation \eqref{gr-start} can be written similarly to \eqref{grang}, \eqref{rtder}:
\begin{equation}\label{grang-static}
\int_{S^2} X_f(z,\bt)\phi(\bt)\delta'(\al\cdot\bt)d\bt
=\int_{\br^3} f(x)\phi(x-z)\delta'(\al\cdot(x-z))dx.
\end{equation}
In \eqref{grang-static} the plane of integration and the reconstruction point are assumed to be fixed. Thus, the function $\phi$ may also depend on $\al$ and $x_0$. Assuming the source trajectory satisfies the Kirillov-Tuy condition, for each $(x_0,\al)\in U\times S^2$ we can find locally smooth solutions $s=s_j(x_0,\al)$ to the equation
\begin{equation}\label{ips}
\al\cdot(x_0-z(s))=0.
\end{equation}
Substituting $z=z(s_j)$ into \eqref{grang-static} gives
\begin{equation}\label{grang-static-2}
\begin{split}
Q_j(x_0,\al):&=\int_{S^2} X_f(z(s_j),\bt)\phi(\bt;x_0,\al)\delta'(\al\cdot\bt)d\bt\\
&=\int_{\br^3} f(x)\phi(x-z(s_j);x_0,\al)\delta'(\al\cdot(x-x_0))dx,\ s_j=s_j(x_0,\al).
\end{split}
\end{equation}
By construction, $Q_j(x_0,\al)$ can be computed from the data. Similarly to Section~\ref{sec:first_inv}, define $x_t=x_0+t\al$.  Substituting into \eqref{grang-static-2} and differentiating gives
\begin{equation}\label{grang-deriv}
\begin{split}
\left.\frac{d}{dt} Q_j(x_t,\al)\right|_{t=0}=&-\int_{\br^3} f(x)\phi(x-z(s_j);x_0,\al)\delta''(\al\cdot(x-x_0))dx\\
&+\int_{\br^3} f(x)\left.\frac{d}{dt} \phi(x-z(s_j(x_t,\al));x_t,\al)\right|_{t=0}\delta'(\al\cdot(x-x_0))dx.
\end{split}
\end{equation}
Clearly,
\begin{equation}\label{phi-deriv}
\begin{split}
\left.\frac{d}{dt} \phi(x-z(s_j(x_t,\al));x_t,\al)\right|_{t=0}
=d_y\phi(x-z(s_j(y,\al));y,\al)|_{y=x_0}\al.
\end{split}
\end{equation}

Let $\{N_j(x,\al)\}$ be a smooth, finite partition of unity on ${\overline U}\times S^2$ constructed as in Section~\ref{sec:first_inv}. Multiplying \eqref{grang-deriv} by $N_j(x_0,\al)$, summing over all $j$, dividing by $8\pi^2$, and arguing similarly to \eqref{almost-inv}--\eqref{extra-la}, we obtain the analogues of \eqref{pdo-inv} and \eqref{amplitude}:
\begin{equation}\label{pdo-inv-static}
\begin{split}
(\CB f)(x_0):=&\frac1{8\pi^2}\int_{S^2} \sum_j N_j(x_0,\al)\left.\frac{d}{d t}Q_j(x_t, \al)\right|_{t=0}d\al\\
=&\frac1{(2\pi)^3}\int_{\br^3}\int_{\br^3} f(x) \sum_j N_j(x_0,\xi) B_j(x,x_0,\xi) e^{i\xi\cdot(x-x_0)}dx d\xi,
\end{split}
\end{equation}
where
\begin{equation}\label{amplitude-pdo-static}
\begin{split}
B_j(x,x_0,\xi)=\phi(x-z(s_j(x_0,\xi));x_0,\xi)+
i \frac{d_y\phi(x-z(s_j(y,\xi));y,\xi)|_{y=x_0}\xi}{|\xi|^2}.
\end{split}
\end{equation}
In \eqref{amplitude-pdo-static}, $\phi$ and $s_j$, as functions of $\al$, are extended from $S^2$ to $\br^3\setminus\{0\}$ as homogeneous of degree zero.

Strictly speaking, $B_j$ is not an amplitude since $\phi$ in \eqref{amplitude-pdo-static} is not smooth in $x$ when $x=z(s)$. However, we can multiply $B_j$ by the cut-off $h(x)$ (cf. \eqref{new-amplitude}). This does not alter the operator $\CB$ acting on functions $f\in\coi(K)$, and the product $h(x)B_j(x,x_0,\xi)$ is an amplitude.

In order to have accurate reconstruction, we choose $\phi$ such that 
\begin{equation}\label{phi-prop-1}
\phi(x-z(s_j(y,\xi));y,\xi)\equiv 1,\ |x-y|<\e_1,\ x,y\in U,
\end{equation}
for some $\e_1>0$. Using \eqref{phi-prop-1} in \eqref{amplitude-pdo-static} implies 
\begin{equation}\label{derives-ampl}
B_j(x_0,x_0,\xi)\equiv 1,\ \pa_x^m B_j(x,x_0,\xi)|_{x=x_0}\equiv 0,\ |m|\ge 1, (x_0,\xi)\in\ts N_j,
\end{equation}
where $m$ is a multiindex. Hence, if \eqref{phi-prop-1} holds, the {\it symbol} of the PDO $\CB$ equals 1 (see \cite{dus}, Theorem 2.5.1).

In order to achieve localized reconstruction, we choose $\phi$ such that 
\begin{equation}\label{phi-prop-2}
\phi(x-z(s_j);y,\xi)\equiv 0\text{ if } \frac{x-z(s_j)}{|x-z(s_j)|}\cdot \frac{y-z(s_j)}{|y-z(s_j)|}<1-\e_2, \ s_j=s_j(y,\xi),
\end{equation}
for some $\e_2>0$. Obviously, given any $\e_2>0$ one can find $\e_1>0$ such that the conditions  \eqref{phi-prop-1} and \eqref{phi-prop-2} are non-contradictory. Thus, the inversion formula \eqref{pdo-inv-static} has two desirable properties: (i) it reconstructs $f$ up to a $\ci$ function, and (ii) given any $\e>0$, we can find the function $\phi$ such that reconstruction at any $x\in U$ uses integrals of $f$ along lines passing through an $\e$-neighborhood of $x$.

\section{Discussion and some generalizations} \label{sec:general}

Let us compare our results with a more traditional approach based on using $X^*X$. Here $X^*$ is a backprojection operator, which is related to the formal dual of $X$ and includes all the necessary cut-offs to make sure the composition $X^*X$ is well-defined. There is no need to insert any operator between $X^*$ and $X$, because we are interested in the location of added singularities, and not in their strength. For the same reason we ignore the weights in $X$ and $X^*$. Thus, we have
\begin{equation}\label{xstarx}
\begin{split}
(X^*Xf)(x_0)&=\int_I\ioi \chi_1(s)\chi_2(t) f(\nu(s,z(s)+t(\psi(s,x_0)-z(s))))dtds\\
&=\frac1{(2\pi)^3}\int_{\br^3}\int_{\br}\int_{\br} \int_{\br^3}f(x) \chi_1(s)\chi_2(t) e^{i\Psi(x,x_0;\eta,s,t)}dx dt ds d\eta,
\end{split}
\end{equation}
where
\begin{equation}\label{new-phase}
\Psi(x,x_0;\eta,s,t)=\eta\cdot (\nu(s,z(s)+t(\psi(s,x_0)-z(s)))-x).
\end{equation}
Here $\chi_1\in \coi(I)$ and $\chi_2\in\coi(\br_+)$. Changing variables $s=\tilde s/|\eta|$ and $t=\tilde t/|\eta|$ (cf. e.g. \cite{dus}, p. 40), we obtain that  $X^*X$ is a singular FIO (see \cite{gruhl}) with the frequency variables $\eta,\tilde s,\tilde t$, the amplitude $\chi_1(\tilde s/|\eta|)\chi_2(\tilde t/|\eta|)$, and the phase function $\tilde\Psi(x,x_0;\eta,\tilde s,\tilde t):=\Psi(x,x_0;\eta,\tilde s/|\eta|,\tilde t/|\eta|)$.  
As is easily seen, the condition $d_{\eta,\tilde s,\tilde t}\tilde \Psi=0$ is equivalent to the condition $d_{\eta,s,t}\Psi=0$. The latter gives
\begin{align}\label{cond1}
&y:=z(s)+t(\psi(s,x_0)-z(s))=\psi(s,x),\\ \label{cond2}
&\eta\cdot d_y\nu(s,y)\bt(s,x_0)=0,\\ \label{cond3}
&\eta\cdot (\pa_s\nu(s,y)+d_y\nu(s,y)\pa_s(z(s)+t(\psi(s,x_0)-z(s)))=0.
\end{align}

The operator $X^*X$ can add singularities because of two reasons: (i) the symbol of $X^*X$ is singular, and (ii) its canonical relation is not diagonal. First consider case (ii). Microlocally away from the singularity of the symbol, the canonical relation of $X^*X$ is diagonal if $d_{\eta,s,t}\Psi=0$ implies $x=x_0$. 
Condition \eqref{cond1} implies
\begin{equation}\label{bt-eq}
\bt(s,x)=\bt(s,x_0).
\end{equation}
By construction, $\nu(s,\psi(s,x))\equiv x$. Hence
\begin{equation}\label{easy-id}
\pa_s\nu(s,y)+d_y\nu(s,y)\pa_s\psi(s,x)\equiv0,\ y=\psi(s,x).
\end{equation}
Applying \eqref{easy-id} in \eqref{cond3} with $y$ defined in \eqref{cond1} and then using \eqref{cond2}, \eqref{bt-eq} we find
\begin{equation}\label{eq_a1}
\begin{split}
0&=\eta\cdot d_y\nu(s,y)[t\pa_s(\psi(s,x_0)-z(s))-\pa_s(\psi(s,x)-z(s))]\\
&=\eta\cdot d_y\nu(s,y)[tL_0\pa_s\bt(s,x_0)-L\pa_s\bt(s,x)],
\end{split}
\end{equation}
where
\begin{equation}\label{lengths}
L:=|\psi(s,x)-z(s)|,\ L_0:=|\psi(s,x_0)-z(s)|.
\end{equation}
From \eqref{cond1}, $L=tL_0$, so \eqref{eq_a1} implies
\begin{equation}\label{eq_a2}
\eta\cdot d_y\nu(s,y)\pa_s(\bt(s,x_0)-\bt(s,x))=0.
\end{equation}
Ignoring the inconsequential change of variables $\xi\leftrightarrow\eta$ according to
\begin{equation}\label{vars}
d_x\psi(s,x_0)^{-T}\xi \leftrightarrow d_x\psi(s,x)^{-T}\eta,
\end{equation}
conditions \eqref{bt-eq}, \eqref{cond2}, and \eqref{eq_a2} (or, \eqref{cond1}--\eqref{cond3}) are equivalent to conditions \eqref{s-eqn} (cf. \eqref{gadot-def}) and \eqref{cond-3-final}. 


Consider now case (i). As is seen from \eqref{dsdt} and \eqref{almost-inv}, the singularity of the symbol of $R^*\tilde D X$ occurs when $\Theta\cdot\pa_s\dot\ga(s,x_0)=0$. To find the top order symbol of $X^* X$ in a neighborhood of $(x=x_0,x_0,\eta)$, we need to compute the asymptotics of the integral $\int (\cdot) \exp(i\Psi(x_0,x_0;\eta=\sigma\Theta,s,t) dsdt$ as $\sigma\to\infty$. The critical point of the phase is $(t_0=1,s=s_0)$, where $s_0$ solves $\Theta\cdot\dot\ga(s,x_0)=0$. As before, an elementary calculation gives that the symbol is singular when $\Theta\cdot\pa_s\dot\ga(s,x_0)=0$.

The above argument shows that the mechanisms by which the operators $R^*\tilde D X$ and $X^* X$ can add singularities to the reconstructed image are essentially the same. Therefore, the key advantage of using $R^*\tilde D X$ compared with $X^* X$ is the ability to use cut-offs in the frequency domain and thereby eliminate both reasons leading to artifacts. That ability is based on the redundancies present in the restricted ray transform data. The redundancy is reflected in the existence of multiple solutions to the equation \eqref{s-eqn}. 

Note that not all source trajectories have enough redundancies to allow complete artifact removal even in the static case. For example, in the case of a helix there are planes that intersect the trajectory at only one point, and this intersection is tangential. On the other hand, another classical source trajectory - two orthogonal circles - does have enough redundancies to allow complete artifact removal in the static case. For general source trajectories $C$ and general deformations $\psi$, the condition that allows complete artifact removal can be stated as follows: {\it for any $(x_0,\Theta)\in U\times S^2$ there exists at least one non-critical solution $s$ to \eqref{s-eqn}}. Here ``non-critical'' is understood not in the narrow sense of \eqref{denom}, but in the more general sense of \eqref{crit-arc}.

Next we discuss various generalizations of the approaches proposed in the previous sections. Consider a collection of smooth curves parametrized by the arc length
\begin{equation}\label{curves}
\gamma_{s,q}(t),\ s\in I,q\in S^2,\ t\ge0, 
\end{equation}
$t$ is the parameter (the arc length) along the curves, $\gamma_{s,q}(0)=z(s)\in C$ (cf. \eqref{curve}), and $\dot\gamma_{s,q}(0)=q$ for any $s\in I$ and $q\in S^2$. It is convenient to think of $S^2$ as a two-dimensional detector. Our {\it main assumption} is that for each $s\in I$ the equation
\begin{equation}\label{x-to-curves}
x=\gamma_{s,q}(t),\ x\in \br^3\setminus \{z(s)\}, 
\end{equation}
has a unique smooth solution $t=\hat t(s,x)$, $q=\hat q(s,x)$, $\hat t,\hat q\in\ci(\{(s,x)\in I\times U:x\not=z(s)\})$. Continuing the medical analogy, $C$ is the x-ray source trajectory, and $\hat q(s,x)$ is the projection of the reconstruction point $x$ on the detector. To avoid confusion, $C$ will be called source trajectory, and $\gamma$'s will be called curves. The tomographic data are
\begin{equation}\label{gen-data}
X_f(s,q):=\ioi f(\gamma_{s,q}(t)) w_0(s,q,t)dt,\ s\in I,\ q\in S^2,
\end{equation}
for some smooth strictly positive weight $w_0$.


As is easily seen, there exists a family of smooth maps $y=\psi(s,x)$, $s\in I$, such that the images of the curves $\gamma_{s,q}(t)\to \psi(s,\gamma_{s,q}(t))$, $t>0$, are straight lines. For each $s\in I$, the map $\psi(s,\cdot)$ is given by
\begin{equation}\label{gp-psi}
\begin{split}
x \to \psi(s,x):=z(s)+\hat t(s,x)\hat q(s,x).
\end{split}
\end{equation}
By construction, $\psi(s,x)$ approaches the identity map as $x\to z(s)$. Thus, the algorithm described in Section~\ref{sec:first_inv} applies to a general class of ray transforms, and the assumption about the existence of ``deformation'' functions that map curves into lines is not restrictive. The assumption that these deformations become the identity map outside of some bounded set is not required either as long as $f$ is compactly supported.
The entire derivation in Section~\ref{sec:first_inv} can be made in terms of the original curves $\ga$ rather than in terms of their straightened out versions via the Grangeat formula. It may depend on a particular application whether the calculation in the original coordinates or the transformed coordinates is preferred.

Combining the idea of Section~\ref{sec:alt_inv} with the algorithm of Section~\ref{sec:first_inv} shows that by introducing a cut-off function $\phi$ the algorithm can be made to use only $\ga$'s passing through a small neighborhood of a reconstruction point $x_0$. In this case the result of reconstruction can still be written in the form $\CB f$, where $\CB$ is an elliptic PDO with principal symbol 1.

The algorithms of Sections~\ref{sec:first_inv} and \ref{sec:alt_inv} are based on integrating the derivative of the cone beam data to obtain an intermediate function $Q$ (Step 1) and then backprojecting the derivative of $Q$ (Step 2). See \eqref{grang}, \eqref{integral}, and \eqref{pdo-inv-2} in Section~\ref{sec:first_inv} as well as \eqref{grang-static}, \eqref{grang-static-2}, and \eqref{pdo-inv-static} in Section~\ref{sec:alt_inv}. In fact, the distribution of derivatives across the two steps is fairly flexible. For instance, one can use a second order derivative in Step 1 and no derivatives in Step 2, or -- no derivatives in Step 1 and a second order derivative in Step 2. In each of these cases one gets an elliptic PDO with principal symbol 1. Even more generally, if an $m$-th order derivative is used in Step 1, and an $n$-th order derivative is used in Step 2, then one gets an elliptic PDO of order $m+n-2$. The latter can then be inverted (modulo $\ci$) by its parametrix. In each of these cases the phase function does not change and remains equal to $\Psi_j$.

\appendix
\section{Finding an endpoint of an arc of critical directions.}\label{sec:app}

Throughout this section we assume $s=s_j(x_0,\Theta)$. To enforce the condition $\bt(s,x)=\bt(s,x_0)$ suppose that $x=x(\e)$  satisfies 
\begin{equation}\label{x-curve}
\psi(s,x)-\psi(s,x_0)=\e (\psi(s,x_0)-z(s))
\end{equation}
for $\e$ small. Thus, we also have (cf. \eqref{lengths})
\begin{equation}\label{L-rel}
L-L_0=\e L_0.
\end{equation}
To see what happens with \eqref{cond-3-final} as $x\to x_0$, we can consider the limit of
\begin{equation}\label{step-zero}
\frac1\e\left[\frac{\pa_s(\psi(s,x)-z(s))}L-\frac{\pa_s(\psi(s,x_0)-z(s))}{L_0}\right]
\end{equation}
as $\e\to0$. Here we have used that, in view of \eqref{sj-def}, there is no need to differentiate $1/L$ and $1/L_0$. In view of \eqref{L-rel}, the expression in \eqref{step-zero} transforms to
\begin{equation}\label{step-1}
\begin{split}
\frac1\e\left[\frac{\pa_s(\psi(s,x)-\psi(s,x_0))}L+\pa_s(\psi(s,x_0)-z(s))\left(\frac1{L}-\frac1{L_0}\right)\right]\\
=\frac{(\pa_s d_x\psi(s,x_0))(x-x_0)/\e}L-\pa_s(\psi(s,x_0)-z(s))\frac1{L_0}+O(\e).
\end{split}
\end{equation}
Using \eqref{x-curve} gives
\begin{equation}\label{x-limit}
x-x_0=\e d_x\psi(s,x_0)^{-1}(\psi(s,x_0)-z(s))+O(\e^2).
\end{equation}
Substitute \eqref{x-limit} into \eqref{step-1} and take the limit as $\e\to0$:
\begin{equation}\label{step-2}
(\pa_s d_x\psi(s,x_0))d_x\psi(s,x_0)^{-1}\bt(s,x_0)-\pa_s\bt(s,x_0)+c \bt(s,x_0)
\end{equation}
for some scalar $c$. Clearly,
\begin{equation}\label{play-beta}
\begin{split}
\pa_s \bt(s,x_0)&=\pa_s  \left(d_x\psi(s,x_0) d_x\psi(s,x_0)^{-1}\bt(s,x_0)\right)=\pa_s  \left(d_x\psi(s,x_0) \dot\ga(s,x_0)\right)\\
&=(\pa_s  d_x\psi(s,x_0)) \dot\ga(s,x_0)+d_x\psi(s,x_0) \pa_s\dot\ga(s,x_0).
\end{split}
\end{equation}
Recall that $\dot\ga$ is defined in \eqref{gadot-def}. Using \eqref{play-beta} simplifies \eqref{step-2} to
\begin{equation}\label{step-3}
-d_x\psi(s,x_0) \pa_s\dot\ga(s,x_0)+c \bt(s,x_0).
\end{equation}
Thus, in the limit as $x\to x_0$, the second condition in \eqref{cond-3-final} becomes
\begin{equation}\label{step-final}
\xi\cdot \pa_s\dot\ga(s,x_0)=0,
\end{equation}
where we have used \eqref{sj-def} again. Combining with \eqref{s-eqn} and comparing with \eqref{denom} proves the desired assertion.

\bibliographystyle{amsalpha}
\bibliography{bibliogr}

\end{document}